\newtheorem{theorem}{Theorem}[section]
\newtheorem{thmy}{Theorem}
\newtheorem{lemma}[theorem]{Lemma}
\def\barr{\begin{array}}
\def\earr{\end{array}}
\title{Finite groups with large Chermak-Delgado lattices}
\author{Georgiana Fasol\u a and Marius T\u arn\u auceanu}
\date{September 1, 2022}
\begin{document}

\maketitle

\begin{abstract}
    Given a finite group $G$, we denote by $L(G)$ the subgroup lattice of $G$ and by ${\cal CD}(G)$ the Chermak-Delgado lattice of $G$.
    In this note, we determine the finite groups $G$ such that $|{\cal CD}(G)|=|L(G)|-k$, $k=1,2$.
\end{abstract}

{\small
\noindent
{\bf MSC2020\,:} Primary 20D30; Secondary 20D60, 20D99.

\noindent
{\bf Key words\,:} Chermak-Delgado measure, Chermak-Delgado lattice, subgroup lattice, generalized quaternion $2$-group.}

\section{Introduction}

Let $G$ be a finite group and $L(G)$ be the subgroup lattice of $G$. The \textit{Chermak-Delgado measure} of a subgroup $H$ of $G$ is defined by
\begin{equation}
m_G(H)=|H||C_G(H)|.\nonumber
\end{equation}Let
\begin{equation}
m^*(G)={\rm max}\{m_G(H)\mid H\leq G\} \mbox{ and } {\cal CD}(G)=\{H\leq G\mid m_G(H)=m^*(G)\}.\nonumber
\end{equation} Then the set ${\cal CD}(G)$ forms a modular, self-dual sublattice of $L(G)$, which is called the \textit{Chermak-Delgado lattice} of $G$. It was first introduced by Chermak and Delgado \cite{4}, and revisited by Isaacs \cite{5}. In the last years there has been a growing interest in understanding this lattice (see e.g. \cite{1,2,3,6,7,8,11,12,13,14}). We recall several important properties of the Chermak-Delgado measure:
\begin{itemize}
\item[$\bullet$] if $H\leq G$ then $m_G(H)\leq m_G(C_G(H))$, and if the measures are equal then $C_G(C_G(H))=H$;
\item[$\bullet$] if $H\in {\cal CD}(G)$ then $C_G(H)\in {\cal CD}(G)$ and $C_G(C_G(H))=H$;
\item[$\bullet$] the maximal member $M$ of ${\cal CD}(G)$ is characteristic and satisfies ${\cal CD}(M)={\cal CD}(G)$;
\item[$\bullet$] the minimal member $M(G)$ of ${\cal CD}(G)$ (called the \textit{Chermak-Delgado subgroup} of $G$) is characteristic, abelian and contains $Z(G)$.
\end{itemize}

In \cite{12}, the Chermak-Delgado measure of $G$ has been seen as a function
\begin{equation}
m_G:L(G)\longrightarrow\mathbb{N}^*,\, H\mapsto m_G(H),\, \forall\, H\in L(G).\nonumber
\end{equation}If $G$ is non-trivial, then $m_G$ has at least two distinct values, or equivalently ${\cal CD}(G)\neq L(G)$ (see Corollary 3 of \cite{11}). This leads to the following natural question:

\begin{center}
\textit{How large can the lattice ${\cal CD}(G)$ be}?
\end{center}

Note that the dual problem of finding finite groups with small Chermak-Delgado lattices has been studied in \cite{6,7}.
\bigskip

Our main result is stated as follows.

\begin{theorem}
Let $G$ be a finite group. Then
\begin{itemize}
\item[{\rm a)}] $|{\cal CD}(G)|=|L(G)|-1$ if and only if $G\cong\mathbb{Z}_p$ or $G\cong Q_8$;
\item[{\rm b)}] $|{\cal CD}(G)|=|L(G)|-2$ if and only if $G\cong\mathbb{Z}_{p^2}$.
\end{itemize}
\end{theorem}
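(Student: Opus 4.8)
The plan is to treat abelian groups directly and then to split the nonabelian case according to whether $G$ itself lies in ${\cal CD}(G)$. If $G$ is abelian then $C_G(H)=G$ for every $H\le G$, so $m_G(H)=|H|\,|G|$ is maximised only at $H=G$; hence ${\cal CD}(G)=\{G\}$ and $|{\cal CD}(G)|=|L(G)|-1$. Thus for abelian $G$, $|{\cal CD}(G)|=|L(G)|-1$ is equivalent to $|L(G)|=2$, i.e. $G\cong\mathbb{Z}_p$, and $|{\cal CD}(G)|=|L(G)|-2$ is equivalent to $|L(G)|=3$, which forces $G\cong\mathbb{Z}_{p^2}$ (the unique group with exactly three subgroups). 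As the converse implications are a one-line check, it remains to prove that a nonabelian $G$ satisfies $|{\cal CD}(G)|=|L(G)|-1$ only when $G\cong Q_8$, and never satisfies $|{\cal CD}(G)|=|L(G)|-2$.

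So let $G$ be nonabelian and set $k=|L(G)|-|{\cal CD}(G)|\ge1$. First I would handle the case $G\notin{\cal CD}(G)$: if $M$ is the maximal member of ${\cal CD}(G)$ (so $M<G$), then every member of ${\cal CD}(G)$ lies in $L(M)$ and ${\cal CD}(M)={\cal CD}(G)$, whence $k=(|L(G)|-|L(M)|)+(|L(M)|-|{\cal CD}(M)|)$; the second term is $\ge1$ by Corollary~3 of \cite{11} applied to the nontrivial group $M$, and the first counts the subgroups of $G$ not contained in $M$, which is $\ge2$ since $G$, not being cyclic of prime-power order, has no unique maximal subgroup. Hence $k\ge3$. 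Next, when $G\in{\cal CD}(G)$ we have $m^*(G)=|G|\,|Z(G)|$ and $M(G)=Z(G)$; if moreover $Z(G)=1$ then $m^*(G)=|G|$ and no Sylow subgroup $S$ can lie in ${\cal CD}(G)$, because then $|C_G(S)|=|G|/|S|$ is coprime to the prime $p$ dividing $|S|$, contradicting $1\neq Z(S)\le C_G(S)$. Since $Z(G)=1$ makes $G$ non-nilpotent with at least two prime divisors, some Sylow subgroup has at least three conjugates, all absent from ${\cal CD}(G)$, so again $k\ge3$.

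We are left with $G\in{\cal CD}(G)$ and $Z(G)\neq1$, so that $m_G(1)=|G|<m^*(G)$ and $1\notin{\cal CD}(G)$. If $k=1$ then ${\cal CD}(G)=L(G)\setminus\{1\}$ is a sublattice of $L(G)$; this forces any two nontrivial subgroups to meet nontrivially, i.e. $G$ has a unique minimal subgroup, and a nonabelian group with that property is a generalized quaternion $2$-group $Q_{2^n}$ with $n\ge3$. Here a direct computation finishes part~(a): every subgroup of $Q_8$ other than $1$ has measure $16$, so ${\cal CD}(Q_8)=L(Q_8)\setminus\{1\}$ and $k=1$, whereas for $n\ge4$ one checks that $C_{Q_{2^n}}(a^ib)=\langle a^ib\rangle$ has order $4$ for every element $a^ib$ lying outside the cyclic maximal subgroup $\langle a\rangle$, which gives $m^*(Q_{2^n})=2^{2n-2}$ attained only at $\langle a\rangle$, so ${\cal CD}(Q_{2^n})=\{\langle a\rangle\}$ and $k=|L(Q_{2^n})|-1\ge10$. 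Thus for nonabelian $G$, $k=1$ is equivalent to $G\cong Q_8$.

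Finally I must exclude $k=2$, i.e. ${\cal CD}(G)=L(G)\setminus\{1,N\}$ with $N\neq1,G$, still assuming $G\in{\cal CD}(G)$ and $Z(G)\neq1$. Closure of $L(G)\setminus\{1,N\}$ under intersection shows $G$ has at most two minimal subgroups (two distinct ones avoiding $N$ would intersect in $1$), and the one-minimal-subgroup case is the quaternion case just treated; so $G$ has exactly two minimal subgroups $N$ and $R$, of distinct primes $q\neq r$, all Sylow subgroups of $G$ are cyclic or generalized quaternion, $N$ and $R$ are normal (each being the unique subgroup of its order), and $Z(G)$ --- being abelian with cyclic Sylow subgroups --- is cyclic. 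From $m_G(R)=m^*(G)=|G|\,|Z(G)|$ and the fact that $C_G(R)$ contains a Sylow $r$-subgroup, one gets $r\mid|Z(G)|$, hence $R\le Z(G)$; moreover $q\nmid|Z(G)|$, for otherwise $N\le Z(G)$ as well, $NR$ is central, and $m_G(NR)=m^*(G)$ gives $Z(G)\cong\mathbb{Z}_{qr}$, placing $1$, $N$, $R$ all strictly below $M(G)=Z(G)$ and so forcing $k\ge3$. Hence $Z(G)$ is a cyclic $r$-group, and since $R<Z(G)$ would push the minimal subgroup $R$ out of ${\cal CD}(G)$ we must have $Z(G)=R$, of order $r$. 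Now the Sylow $q$-subgroup $S$ cannot lie in ${\cal CD}(G)$ (its centralizer would be an $r$-group containing the nontrivial $q$-group $Z(S)$), so it is one of the two missing subgroups, hence equal to $N$; thus $|G|_q=q$. Then $NR\cong\mathbb{Z}_{qr}$ lies in ${\cal CD}(G)$, and $m_G(NR)=m^*(G)=r|G|$ forces $C_G(NR)$ to be a Sylow $r$-subgroup $T$; but $T\le C_G(N)$ together with $N\le C_G(N)$ gives $C_G(N)=G$, i.e. $N\le Z(G)=R$, contradicting $q\neq r$. This rules out $k=2$ for nonabelian $G$ and finishes the proof. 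I expect the delicate point to be exactly this last step: squeezing a contradiction out of the two ``missing-subgroups'' sublattice conditions requires playing the lattice structure of ${\cal CD}(G)$ against precise arithmetic of Sylow subgroups and centralizers, whereas the abelian case, the reduction with $G\notin{\cal CD}(G)$, and the $Z(G)=1$ case are comparatively routine.
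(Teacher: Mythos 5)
Your argument is correct, but it takes a genuinely different route from the paper's. The paper's engine is Theorem B (the bound $|{\rm Im}(m_G)|\ge 1+\sum_p n_p$ from \cite{12}): since $|{\cal CD}(G)|=|L(G)|-k$ forces $|{\rm Im}(m_G)|\le k+1\le 3$, that inequality immediately confines $G$ to being a $p$-group with $|Z(G)|\in\{p,p^2\}$ or a $\{p,q\}$-group whose Sylow subgroups have centers of prime order, and the rest is a measure-by-measure case analysis. You avoid Theorem B entirely and work with the lattice structure of ${\cal CD}(G)$ itself: the maximal member $M$ together with ${\cal CD}(M)={\cal CD}(G)$ and Corollary 3 of \cite{11} disposes of $G\notin{\cal CD}(G)$; closure of ${\cal CD}(G)=L(G)\setminus\{1\}$ or $L(G)\setminus\{1,N\}$ under intersections bounds the number of minimal subgroups of $G$ by two; Theorem A plus the explicit quaternion computation (your inline version of Lemma 2.1) settles the one-minimal-subgroup case; and your two-prime endgame parallels the paper's Case 2 but is sharper in one respect, pinning $|G|_q=q$ by identifying the Sylow $q$-subgroup with the missing subgroup $N$ before deriving the final centralizer contradiction. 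What your approach buys is self-containment --- only the standard structural properties of ${\cal CD}(G)$ listed in the introduction are needed --- at the cost of a longer case split; what the paper's approach buys is that Theorem B does the heavy lifting of restricting $|G|$ and the centers in one stroke. Two small points deserve an explicit line in a final write-up: in the reduction for $G\notin{\cal CD}(G)$ you should observe that $M\neq 1$ (if $M=1$ then $Z(G)\le M(G)=1$, so $m_G(G)=|G|=m^*(G)$ and $G\in{\cal CD}(G)$, a contradiction); and the claim that your two minimal subgroups have \emph{distinct} prime orders rests on the fact that the number of subgroups of order $p$ in a group of order divisible by $p$ is congruent to $1$ modulo $p$, so exactly two minimal subgroups of a common prime order is impossible.
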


For the proof of the above theorem, we need the following well-known result (see e.g. (4.4) of \cite{10}, II).

\begin{thmy}
A finite $p$-group has a unique subgroup of order $p$ if and only if it is either cyclic or a generalized quaternion $2$-group.
\end{thmy}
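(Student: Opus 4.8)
The plan is to prove both implications, treating the forward (``if'') direction as a short verification and concentrating the work on the converse. If $P\cong\mathbb{Z}_{p^n}$ is cyclic, it has exactly one subgroup of each order dividing $p^n$, in particular a unique subgroup of order $p$. If $P\cong Q_{2^n}=\langle a,b\mid a^{2^{n-1}}=1,\ b^2=a^{2^{n-2}},\ bab^{-1}=a^{-1}\rangle$, then every element outside $\langle a\rangle$ squares to the central involution $a^{2^{n-2}}$, so it has order $4$, while $\langle a\rangle$ contains the single involution $a^{2^{n-2}}$; hence $a^{2^{n-2}}$ is the unique element of order $2$ and $\langle a^{2^{n-2}}\rangle$ the unique subgroup of order $2$.

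For the converse I would argue by induction on $|P|$. First note that the hypothesis passes to subgroups, and that the unique subgroup $Z_1$ of order $p$ is characteristic, hence normal, hence central. A preliminary lemma settles the abelian case: by the structure theorem a finite abelian $p$-group of rank $r$ has $(p^r-1)/(p-1)$ subgroups of order $p$, so a unique such subgroup forces $r=1$, i.e. cyclicity. Consequently every abelian subgroup of $P$ is cyclic; in particular $Z(P)$ is cyclic. Now let $A$ be a maximal abelian normal subgroup of $P$. Then $A$ is cyclic, say $A=\langle a\rangle$ with $|A|=p^m$, and the standard nilpotency argument shows $A$ is self-centralizing, $C_P(A)=A$. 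Conjugation therefore embeds $P/A$ into $\mathrm{Aut}(\mathbb{Z}_{p^m})$, and since $P/A$ is a $p$-group it lands in a Sylow $p$-subgroup of that automorphism group.

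When $p$ is odd, $\mathrm{Aut}(\mathbb{Z}_{p^m})$ is cyclic, so $P/A$ is cyclic; a direct computation then shows any nonabelian extension of $\langle a\rangle$ by a cyclic $p$-group acting through $a\mapsto a^{1+p^t}$ produces an element of order $p$ outside $Z_1$ (as in the modular group $M_{p^n}$, where $b$ and $a^{p^{m-1}}$ give distinct subgroups of order $p$), contradicting uniqueness; hence $P=A$ is cyclic. When $p=2$ the obstruction is genuine, since $\mathrm{Aut}(\mathbb{Z}_{2^m})\cong\mathbb{Z}_2\times\mathbb{Z}_{2^{m-2}}$ is noncyclic for $m\ge 3$. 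Here I would reduce to the case that $P$ has a cyclic subgroup of index $2$ (using the inductive hypothesis on a maximal subgroup, which is cyclic or generalized quaternion), invoke the classification of $2$-groups with a cyclic maximal subgroup, namely $\mathbb{Z}_{2^n}$, $\mathbb{Z}_{2^{n-1}}\times\mathbb{Z}_2$, $M_{2^n}$, $D_{2^n}$, $SD_{2^n}$, and $Q_{2^n}$, and finally count involutions: all but the cyclic and generalized quaternion types contain a Klein four-subgroup or two independent involutions, so only $\mathbb{Z}_{2^n}$ and $Q_{2^n}$ survive the unique-involution hypothesis.

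The main obstacle is precisely the prime $p=2$. For odd $p$ the cyclicity of the Sylow $p$-subgroup of $\mathrm{Aut}(\mathbb{Z}_{p^m})$ makes the extension essentially forced and abelian, so the argument closes quickly; but for $p=2$ there really exist noncyclic, nonabelian $2$-groups with a cyclic maximal subgroup (dihedral, semidihedral, modular), and the unique-subgroup-of-order-$p$ hypothesis must be used delicately to eliminate exactly these and isolate the generalized quaternion family. The small cases, notably $Q_8$ (whose cyclic subgroups of index $2$ are not characteristic) and low values of $m$, also require separate checking.
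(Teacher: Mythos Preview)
The paper does not prove Theorem~A at all: it is quoted as a well-known classical result and simply referenced to Suzuki's \emph{Group Theory}, (4.4), II. So there is no in-paper argument to compare against; your sketch is a self-contained proof where the paper offers none.

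Your outline follows the standard route and is essentially sound. Two places would need tightening in a full write-up. In the odd-$p$ case, the assertion that ``any nonabelian extension of $\langle a\rangle$ by a cyclic $p$-group acting through $a\mapsto a^{1+p^t}$ produces an element of order $p$ outside $Z_1$'' is true but not a one-line computation: in a general extension the lift $b$ of a generator of $P/A$ need not itself have order $p$, and one must use the identity $(ba^i)^p = b^p a^{i((1+p^t)^p-1)/p^t}\cdot(\text{correction})$, or equivalently compute $\Omega_1(P)$, to locate a second subgroup of order $p$. In the $p=2$ case, your reduction ``a maximal subgroup is cyclic or generalized quaternion, hence $P$ has a cyclic subgroup of index $2$'' skips a step: if every maximal subgroup is generalized quaternion you only get a cyclic subgroup of index $4$, and one must argue further (for instance via the normalizer of that cyclic subgroup, or by showing at least one maximal subgroup is cyclic) before invoking the classification of $2$-groups with a cyclic maximal subgroup. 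Both gaps are routine to close and are handled in the reference the paper cites.
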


We recall that a \textit{generalized quaternion $2$-group} is a group of order $2^n$ for some positive integer $n\geq 3$, defined by
\begin{equation}
Q_{2^n}=\langle a,b \mid a^{2^{n-2}}= b^2, a^{2^{n-1}}=1, b^{-1}ab=a^{-1}\rangle.\nonumber
\end{equation}

We also need the following theorem taken from \cite{12}.

\begin{thmy}
Let $G$ be a finite group. For each prime $p$ dividing the order of $G$ and $P\in{\rm Syl}_p(G)$, let $|Z(P)|=p^{n_p}$. Then
\begin{equation}
|{\rm Im}(m_G)|\geq 1+\sum_{p}n_p\,.
\end{equation}
\end{thmy}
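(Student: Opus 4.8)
The plan is to exhibit, for each prime $p$ dividing $|G|$, a chain of $n_p+1$ subgroups on which $m_G$ takes $n_p+1$ distinct values that are distinguished by their $p$-adic valuations, and then to glue these families together using the trivial subgroup $1$ as a common bottom element.

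First, fix $P\in{\rm Syl}_p(G)$ and write $a_p$ for the exponent of $p$ in $|G|$, so that $|G|_p=p^{a_p}$ is the $p$-part of $|G|$. Since $Z(P)$ is an abelian $p$-group of order $p^{n_p}$, we may choose a chain $1=Z_0<Z_1<\cdots<Z_{n_p}=Z(P)$ with $|Z_i|=p^i$. Each $Z_i$ lies in $Z(P)$, so $P\leq C_G(Z_i)$; as $P$ is a Sylow $p$-subgroup of $G$ it is then also a Sylow $p$-subgroup of $C_G(Z_i)$, and hence the exponent of $p$ in $|C_G(Z_i)|$ equals $a_p$ for every $i$. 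Consequently the exponent of $p$ in $m_G(Z_i)=|Z_i|\,|C_G(Z_i)|$ equals $i+a_p$ for $i=0,1,\dots,n_p$, so the numbers $m_G(Z_0),m_G(Z_1),\dots,m_G(Z_{n_p})$ are pairwise distinct, and for $i\geq 1$ the exponent of $p$ in $m_G(Z_i)$ strictly exceeds $a_p$.

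The second step is to check that the values coming from different primes are genuinely different, and different from $m_G(1)=|G|$. For $i\geq 1$ the value $m_G(Z_i)$ arising from $p$ has $p$-part strictly larger than $p^{a_p}$, whereas for any subgroup $H\leq G$ the order $|C_G(H)|$ divides $|G|$; in particular, if $W$ is any subgroup from the chain associated with a prime $q\neq p$, then $m_G(W)=|W|\,|C_G(W)|$ with $|W|$ a power of $q$, so the $p$-part of $m_G(W)$ is at most $p^{a_p}$. Hence none of the ``new'' values attached to $p$ (those with $i\geq 1$) can coincide with $|G|$ or with any value produced by a different prime. Collecting everything, ${\rm Im}(m_G)$ contains the value $|G|$ together with $\sum_p n_p$ further pairwise distinct values, which yields $|{\rm Im}(m_G)|\geq 1+\sum_p n_p$.

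I do not expect a serious obstacle here; the one point that needs a little care is the assertion that $P$ remains a Sylow $p$-subgroup of each $C_G(Z_i)$ — so that the $p$-part of the centralizer stays constant along the chain — and this is immediate from $P\leq C_G(Z_i)\leq G$ together with $|P|=|G|_p$. The remainder of the argument is bookkeeping with prime-power divisibility.
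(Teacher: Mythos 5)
Your argument is correct: placing a chain $1=Z_0<Z_1<\cdots<Z_{n_p}=Z(P)$ inside $Z(P)$ forces $P\leq C_G(Z_i)$, so the $p$-adic valuation of $m_G(Z_i)$ is exactly $a_p+i$, which separates these values from one another, from $m_G(1)=|G|$, and from the values attached to any other prime. The paper states this as Theorem B and imports it from \cite{12} without proof; your valuation argument is precisely the standard one behind that result, so there is nothing further to compare.
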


Finally, we indicate a natural open problem concerning the above study.

\bigskip\noindent{\bf Open problem.} Determine the finite groups $G$ such that $|{\cal CD}(G)|=|L(G)|-k$, where $k\geq 3$.
\bigskip

Most of our notation is standard and will usually not be repeated here. Elementary notions and results on groups can be found in \cite{5}.
For subgroup lattice concepts we refer the reader to \cite{9}.

\section{Proof of the main result}

First of all, we solve the problem for generalized quaternion $2$-groups.

\begin{lemma}
Under the above notation, we have:
\begin{itemize}
\item[{\rm a)}] $|{\cal CD}(Q_{2^n})|=|L(Q_{2^n})|-1$ if and only if $n=3$, i.e. $G\cong Q_8$;
\item[{\rm b)}] $|{\cal CD}(Q_{2^n})|\neq |L(Q_{2^n})|-2$ for all $n\geq 3$.
\end{itemize}
\end{lemma}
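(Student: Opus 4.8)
The statement is purely computational: I would determine both $L(Q_{2^n})$ and ${\cal CD}(Q_{2^n})$ explicitly. Throughout, write $G=Q_{2^n}$ and $A=\langle a\rangle\cong\mathbb{Z}_{2^{n-1}}$, the cyclic maximal (hence normal) subgroup of $G$, and recall that $Z(G)=\langle a^{2^{n-2}}\rangle$ has order $2$ and that every element of $G\setminus A$ has order $4$ (with square $a^{2^{n-2}}$), since $b^{-1}ab=a^{-1}$.

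\textbf{Step 1: the subgroup lattice.} Each $H\le G$ is of one of two types. Either $H\le A$, and then $H$ is one of the $n$ cyclic subgroups $\langle a^{2^{n-1-k}}\rangle$, $0\le k\le n-1$; or $H\not\le A$, in which case $H\cap A$ has index $2$ in $H$, say $|H\cap A|=2^{j}$ with $1\le j\le n-1$, and $H$ is determined by the pair consisting of $H\cap A=\langle a^{2^{n-1-j}}\rangle$ and the coset of $H\cap A$ in $A$ that meets $H\setminus A$, of which there are $2^{n-1-j}$. Summing over $j$ gives $|L(Q_{2^n})|=n+\sum_{j=1}^{n-1}2^{n-1-j}=2^{n-1}+n-1$.

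\textbf{Step 2: the Chermak-Delgado lattice.} I would now compute $m_G(H)=|H|\,|C_G(H)|$ on the subgroups listed above. If $H\le A$ and $|H|\ge 4$, then $C_G(H)\supseteq A$ and $C_G(H)\ne G$ (otherwise $H\le Z(G)$, impossible as $|H|\ge 4>2$), so $C_G(H)=A$ and $m_G(H)=|H|\,2^{n-1}$; this is maximal, equal to $2^{2n-2}$, exactly when $H=A$. Also $m_G(\{1\})=2^n$ and $m_G(Z(G))=2^{n+1}$. For $x\in G\setminus A$ one has $C_G(x)\cap A=\{a^t:a^{2t}=1\}=Z(G)$, hence $|C_G(x)|=2\,|C_G(x)\cap A|=4$ and $C_G(x)=\langle x\rangle$, so $m_G(\langle x\rangle)=16$; and if $H\not\le A$ with $|H\cap A|\ge 4$, then for any $x\in H\setminus A$ we get $C_G(H)\subseteq C_G(H\cap A)\cap C_G(x)=A\cap\langle x\rangle=Z(G)$, so $C_G(H)=Z(G)$ and $m_G(H)=2|H|\le 2^{n+1}$. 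Therefore $m^*(G)=\max\{2^{2n-2},\,2^{n+1}\}$, and since $2n-2\ge n+1$ with equality iff $n=3$: for $n\ge 4$, $m^*(G)=2^{2n-2}$ is attained only by $A$, so $|{\cal CD}(Q_{2^n})|=1$; for $n=3$, $m^*(G)=16$ is attained by $Z(G)$, by each of the three subgroups of order $4$, and by $G$ --- i.e. by every subgroup except $\{1\}$ (note $m_G(\{1\})=8<16$) --- so $|{\cal CD}(Q_8)|=5$.

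\textbf{Step 3: conclusion.} It remains to compare with $|L(Q_{2^n})|-1=2^{n-1}+n-2$ and $|L(Q_{2^n})|-2=2^{n-1}+n-3$. For $n=3$ these are $5$ and $4$ respectively, and $|{\cal CD}(Q_8)|=5$, which establishes a) and shows b) fails at $n=3$. For $n\ge 4$ both quantities are at least $9$, whereas $|{\cal CD}(Q_{2^n})|=1$, so neither equality can hold. The only slightly delicate steps are the two centralizer identities in Step 2 ($C_G(x)=\langle x\rangle$ for $x$ of order $4$ outside $A$, and $C_G(H)=A$ for $H\le A$ with $|H|\ge 4$) together with the coset count in Step 1; both reduce immediately to the defining relation $b^{-1}ab=a^{-1}$, so I do not expect a genuine obstacle --- the content is in organizing the case analysis cleanly. (Alternatively, for $n=3$ one could note via Theorem B that $|\mathrm{Im}(m_{Q_8})|\ge 2$, but the explicit computation is needed anyway to pin down ${\cal CD}(Q_8)$.)
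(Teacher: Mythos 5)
Your proof is correct and takes essentially the same route as the paper: the paper simply asserts that $m^*(Q_{2^n})=2^{2n-2}$ and that ${\cal CD}(Q_{2^n})$ equals $\{Q_8,\langle a\rangle,\langle b\rangle,\langle ab\rangle,\langle a^2\rangle\}$ for $n=3$ and $\{\langle a\rangle\}$ for $n\geq 4$, which is exactly what your Step 2 establishes before comparing with $|L(Q_{2^n})|=2^{n-1}+n-1$. The only blemish is the phrase ``the coset of $H\cap A$ in $A$ that meets $H\setminus A$'' in Step 1 --- a coset of $H\cap A$ inside $A$ cannot meet $H\setminus A\subseteq G\setminus A$; you mean the coset $a^i(H\cap A)$ obtained by writing $H\setminus A=a^ib\,(H\cap A)$ --- but the resulting count $2^{n-1-j}$ is correct.
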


\begin{proof}
We easily obtain
\begin{equation}
    m^*(Q_{2^n})=2^{2n-2}, \, \forall\, n\geq 3\nonumber
\end{equation}and
\begin{equation}
    {\cal CD}(Q_{2^n})=\left\{\barr{lll}
    \{Q_8, \langle a\rangle, \langle b\rangle, \langle ab\rangle, \langle a^2\rangle\},&n=3\\
    \{\langle a\rangle\},&n\geq 4.\earr\right.\nonumber
\end{equation}These lead immediately to the desired conclusions.
\end{proof}

We are now able to prove our main result.

\bigskip\noindent{\bf Proof of Theorem 1.1.}
\medskip

{\bf a)} Since $|{\cal CD}(G)|=|L(G)|-1$, we have ${\cal CD}(G)=L(G)\setminus\{H_0\}$, where $H_0\leq G$. We infer that $|{\rm Im}(m_G)|=2$ and so $G$ is a $p$-group with $|Z(G)|=p$ by Theorem B. Then $m_G(1)<m_G(Z(G))$, implying that
\begin{equation}
H_0=1 \mbox{ and } m^*(G)=m_G(Z(G))=p^{n+1},\nonumber
\end{equation}where $|G|=p^n$.\newpage

Assume that there exists $H\leq G$ with $|H|=p$ and $H\neq Z(G)$. Then $H\notin{\cal CD}(G)$, which shows that $H=1$, a contradiction. Thus $G$ has a unique subgroup of order $p$ and Theorem A leads to
\begin{equation}
G\cong\mathbb{Z}_{p^n} \mbox{ or } G\cong Q_{2^n} \mbox{ for some } n\geq 3.
\end{equation}In the first case we easily get $n=1$, i.e. $G\cong\mathbb{Z}_p$, while in the second one we get $G\cong Q_8$ by item a) in Lemma 2.1.
\medskip

{\bf b)} The condition $|{\cal CD}(G)|=|L(G)|-2$ means ${\cal CD}(G)=L(G)\setminus\{H_1,H_2\}$, where $H_1,H_2\leq G$. Then $|{\rm Im}(m_G)|\leq 3$. Recall that we cannot have $|{\rm Im}(m_G)|=1$.

If $|{\rm Im}(m_G)|=2$, then
\begin{equation}
m_G(H_1)=m_G(H_2)\neq m^*(G)\nonumber
\end{equation}and again $G$ is a $p$-group with $|Z(G)|=p$. It is clear that one of the two subgroups $H_1$ and $H_2$ must be trivial, say $H_1=1$. Then $G$ has at most two subgroups of order $p$, namely $Z(G)$ and possibly $H_2$. This implies that it has exactly one subgroup of order $p$ because the number of subgroups of order $p$ in a finite $p$-group is congruent to $1$ $({\rm mod}\, p)$. Consequently, one obtains again (2). For $G\cong\mathbb{Z}_{p^n}$ we easily get $n=2$, i.e. $G\cong\mathbb{Z}_{p^2}$, while for $G\cong Q_{2^n}$ we get no solution by item b) in Lemma 2.1.

If $|{\rm Im}(m_G)|=3$, then $m_G(H_1)$, $m_G(H_2)$ and $m^*(G)$ are distinct. Also, the inequality (1) becomes
\begin{equation}
3\geq 1+\sum_{p}n_p\,.\nonumber
\end{equation}Since $n_p\geq 1$ for all $p$, we have the following two possibilities:

\medskip\noindent\hspace{15mm}{\bf Case 1.} $|G|=p^n$ and $|Z(G)|\in\{p,p^2\}$\medskip

Obviously, if $G$ is abelian, we get $G\cong\mathbb{Z}_{p^2}$. Assume that $G$ is not abelian. Since $m_G(1)<m_G(Z(G))=m_G(G)$, we infer that one of the two subgroups $H_1$ and $H_2$ is trivial, and that
\begin{equation}
m^*(G)=m_G(Z(G))=m_G(G).\nonumber
\end{equation}If $|Z(G)|=p$, then $G$ has a unique subgroup of order $p$ and so it is a generalized quaternion $2$-group, contradicting item b) in Lemma 2.1. The same thing can be also said when $|Z(G)|=p^2$ because all subgroups of order $p$ of $G$ are outside of ${\cal CD}(G)$.\newpage

\medskip\noindent\hspace{15mm}{\bf Case 2.} $|G|=p^nq^m$ and the Sylow $p$-subgroups and $q$-subgroups of $G$ have centers of orders $p$ and $q$, respectively\medskip

Let $P$ be a Sylow $p$-subgroup and $Q$ be a Sylow $q$-subgroup of $G$. Since $P\subseteq C_G(Z(P))$, we have
\begin{equation}
m_G(Z(P))=p\,|C_G(Z(P))|=p^{n+1}q^x \mbox{ for some } 0\leq x\leq m,\nonumber
\end{equation}and similarly
\begin{equation}
m_G(Z(Q))=p^yq^{m+1} \mbox{ for some } 0\leq y\leq n.\nonumber
\end{equation}Also, we have
\begin{equation}
m_G(1)=p^nq^m \mbox{ and } m_G(G)=p^nq^m|Z(G)|.\nonumber
\end{equation}We observe that the measures $m_G(Z(P))$, $m_G(Z(Q))$ and $m_G(1)$ are distinct, and consequently they are all possible measures of the subgroups of $G$. We distinguish the following two subcases:

\medskip\noindent\hspace{25mm}{\bf Subcase 2.1.} $Z(G)=1$\medskip

Then $m^*(G)=m_G(1)=m_G(G)$. Indeed, if $m^*(G)=m_G(Z(P))$, then $1$, $G$ and $Z(Q)$ will be outside of ${\cal CD}(G)$, a contradiction. In the same way, we cannot have $m^*(G)=m_G(Z(Q))$. Since $m_G(P)$ is divisibly by $p^{n+1}$ and $m_G(Q)$ is divisibly by $q^{m+1}$, we infer that $m_G(P)=m_G(Z(P))$ and $m_G(Q)=m_G(Z(Q))$. Thus
\begin{equation}
P,Z(P),Q,Z(Q)\notin{\cal CD}(G)\nonumber
\end{equation}and our hypothesis implies that $P=Z(P)$ and $Q=Z(Q)$, i.e. $G$ is a non-abelian group of order $pq$. Assume that $p<q$. Then ${\cal CD}(G)$ consists of the unique subgroup of order $q$ of $G$ and therefore we obtain $|L(G)|=3$, a contradiction.

\medskip\noindent\hspace{25mm}{\bf Subcase 2.2.} $Z(G)\neq 1$\medskip

Then $m_G(1)<m_G(G)$, which shows that $m_G(G)$ equals either $m_G(Z(P))$ or $m_G(Z(Q))$. Assume that $m_G(G)=m_G(Z(P))$. Then $x=m$ and $|Z(G)|=p$, implying that
\begin{equation}
Z(G)=Z(P).
\end{equation}Note that we cannot have $m^*(G)=m_G(Z(Q))$ because in this case $1$, $Z(G)$ and $G$ will be outside of ${\cal CD}(G)$, a contradiction. Consequently, we have\newpage
\begin{equation}
m^*(G)=m_G(Z(P))=m_G(P)=m_G(Z(G))=m_G(G).\nonumber
\end{equation}It follows that $1$, $Z(Q)$ and $Q$ are not contained in ${\cal CD}(G)$, which leads to $Q=Z(Q)$. In other words, we have ${\cal CD}(G)=L(G)\setminus\{1,Q\}$. Thus ${\cal CD}(G)$ is the lattice interval
\begin{equation}
[G/Z(G)]=\{H\in L(G)\mid Z(G)\leq H\leq G\}\nonumber
\end{equation}and Corollary 2 of \cite{11} shows that $G$ is nilpotent. Then $G=P\times Q$ and so $Z(G)=Z(P)\times Q$, contradicting (3).

This completes the proof.\qed

\vspace*{3ex}
\small

\begin{minipage}[t]{7cm}
Georgiana Fasol\u a \\
Faculty of  Mathematics \\
"Al.I. Cuza" University \\
Ia\c si, Romania \\
e-mail: \!{\tt georgiana.fasola@student.uaic.ro}
\end{minipage}
\hfill\hspace{20mm}
\begin{minipage}[t]{5cm}
Marius T\u arn\u auceanu \\
Faculty of  Mathematics \\
"Al.I. Cuza" University \\
Ia\c si, Romania \\
e-mail: \!{\tt tarnauc@uaic.ro}
\end{minipage}

\end{document}